\newtheorem{theor}{Theorem}
\newtheorem{prop}[theor]{Proposition}
\newtheorem{lemma}[theor]{Lemma}
\theoremstyle{definition}
\newtheorem{example}{Example}
\theoremstyle{remark}
\newtheorem{rem}{Remark}
\newcommand{\cev}[1]{\reflectbox{\ensuremath{\vec{\reflectbox{\ensuremath{#1}}}}}}
\newcommand{\pinner}{\mathbin{\mathchoice
{\hbox{\vrule width0.6em depth0pt height0.4pt
	\vrule width0.4pt depth0pt height0.8ex}}
{\hbox{\vrule width0.6em depth0pt height0.4pt
	\vrule width0.4pt depth0pt height0.8ex}}
{\hbox{\kern0.14em
	\vrule width0.48em depth0pt height0.4pt
	\vrule width0.4pt depth0pt height0.6ex\kern0.14em}}
{\hbox{\kern0.1em
	\vrule width0.39em depth0pt height0.4pt
	\vrule width0.4pt depth0pt height0.5ex\kern0.1em}}}}
\newcommand{\inner}{\pinner\,}
\newcommand{\schouten}[1]{[\![#1]\!]}
\newcommand{\Or}{{\rm O\oldvec{r}}}
\newcommand{\cP}{\mathcal{P}}
\newcommand{\cQ}{\mathcal{Q}}
\newcommand{\cR}{\mathcal{R}}
\newcommand{\cX}{{\EuScript X}}    
\newcommand{\cY}{{\EuScript Y}}    
\newcommand{\cZ}{{\EuScript Z}}    
\newcommand{\cV}{{\EuScript V}}    
\newcommand{\cE}{{\EuScript E}}    
\newcommand{\dd}{\partial}
\newcommand{\veps}{\varepsilon}
\newcommand{\Id}{\mathrm{d}}
\DeclareMathOperator{\Lie}{L}
\DeclareMathOperator{\tr}{tr}
\DeclareMathOperator{\dvol}{dvol}
\newcommand{\lshad}{[\![}
\newcommand{\rshad}{]\!]}
\def\oldvec{\mathaccent "017E\relax }
\newcommand{\by}[1]{\textrm{{#1}}}
\newcommand{\jour}[1]{\textrm{{#1}}}
\newcommand{\vol}[1]{\textrm{{V.~#1.}}}
\newcommand{\book}[1]{\textrm{{#1}}}
\newcommand{\coloneqq}{\mathrel{{:}{=}}}
\title{Universal cocycles and the 
graph complex action on homogeneous 
Poisson brackets by diffeomorphisms}
\author{R. Buring\thanks{%
Mathematical Institute, 
Johannes Gutenberg University of Mainz, 
Staudingerweg~9, 
\mbox{D-\/55128} 
Germany.\quad
A part of this research was done while R.B.\ was visiting at the IH\'ES
, supported by MI JGU project~5020.},%
$^{,\P}$
%
\quad 
A.\,V.\,Kiselev\thanks{%
Bernoulli Institute for Mathematics, Computer Science \& 
Artificial Intelligence, University of Groningen, P.O.~Box~407, 9700~AK Groningen, The~Netherlands.}
\,
\thanks{\textit{Current address}: 
Institut des Hautes \'Etudes Scientifiques (IH\'ES),
Le Bois\/--\/Marie, 35~route de Chartres, Bures\/-\/sur\/-\/Yvette, 91440 France.}
\,
\thanks{
Supported 
by BI~RUG project~135110 (Groningen) 
and the IH\'ES (in part, by the Nokia Fund).
}\,$^{\,,\pounds}$
}
\date{{E-mail}:\quad 
${}^{\P}$
\texttt{rburing\symbol{"40}uni-mainz.de},\quad
${}^{\pounds}$
\texttt{A.V.Kiselev\symbol{"40}rug.nl}%
}
\begin{document}
\maketitle
\begin{abstract}\noindent%
The graph complex acts on the spaces of Poisson bi\/-\/vectors $\cP$ by infinitesimal symmetries.
We prove that whenever a Poisson structure is homogeneous, i.e.\ $\cP=\Lie_{\vec{V}}(\cP)$
w.r.t.\ the Lie derivative along some vector field $\smash{\vec{V}}$, but not quadratic
(the coefficients of $\cP$ are not degree\/-\/two homogeneous polynomials),
and whenever its velocity bi\/-\/vector $\dot{\cP}=\cQ(\cP)$, 
also homogeneous w.r.t.\ $\smash{\vec{V}}$ by $\Lie_{\vec{V}}(\cQ)=n\cQ$
whenever $\cQ(\cP)=\Or(\gamma)(\cP^{\otimes^n})$ is obtained using the orientation morphism~$\Or$
from a graph cocycle~$\gamma$ on $n$ vertices and $2n-2$ edges, 
then the $1$\/-\/vector $\smash{\vec{\cX}}=\Or(\gamma)(\smash{\vec{V}}\otimes\smash{\cP^{\otimes^{n-1}}})$ is a Poisson cocycle.
Its 
construction is uniform 
for all Poisson bi\/-\/vectors $\cP$ satisfying the above assumptions, 
on all finite\/-\/dimensional affine manifolds~$M$.
Still, if the bi\/-\/vector $\cQ\not\equiv0$ is exact in the respective Poisson cohomology, so 
there exists a vector field $\smash{\vec{\cY}}$ such that $\cQ(\cP)=\lshad\smash{\vec{\cY}},\cP\rshad$,
then the universal cocycle $\smash{\vec{\cX}}$ does not belong to the coset of~$\smash{\vec{\cY}}$ mod $\ker\lshad\cP,\cdot\rshad$.
We illustrate the construction 
using two 
examples of cubic\/-\/coefficient Poisson brackets associated with the $R$-\/matrices for the Lie algebra~$\mathfrak{gl}(2)$.
\end{abstract}

\paragraph{Introduction.}
\noindent%
Bi\/-\/vector cocycles $\cQ(\cP)=\Or(\gamma)(\smash{\cP^{\otimes^n}})\in\ker\lshad\cP,\cdot\rshad$ are obtained by Kontsevich's graph orientation morphism $\Or$ from graph cocycles $\gamma$ on $n$ vertices and $2n-2$ edges in a way which is uniform 
for all finite\/-\/dimensional affine Poisson manifolds~$(M^r$,\ $\cP)$.
The (non)\/triviality of 
cocycles $\cQ(\cP)$ in the second Poisson cohomology w.r.t.\ the differential $\dd_{\cP}=\lshad\cP,\cdot\rshad$ remains an open problem, twenty\/-\/five years after the discovery of the graph complex and orientation morphism (see~\cite{Ascona96}). In all the Poisson geometries probed so far, the known infinitesimal symmetries $\dot{\cP}=\cQ(\cP)$ of the Jacobi identity $\tfrac{1}{2}\lshad\cP,\cP\rshad=0$ are $\dd_{\cP}$-\/exact: there always exists a vector field $\smash{\vec{\cY}}$ such that $\cQ(\cP)=\lshad\smash{\vec{\cY}},\cP\rshad$.
The evolution $\cP(\veps=0)\longmapsto\cP(\veps>0)$ of the tensor $\cP$ then amounts to its reparametrisations
under the diffeomorphisms of Poisson manifold which are induced by the shifts along the integral trajectories
of the vector field~$\smash{\vec{\cY}}$.
This is why, 
instead of producing new Poisson brackets from a given one, 
the Kontsevich graph flows on the spaces of Poisson bi\/-\/vectors induce (non)\/linear diffeomorphisms of the base manifold~$M$, although no more than its affine structure was the initial assumption and no possibility of smooth coordinate reparametrizations was 
presumed.

For 
a much 
used class of (scaling-)\/homogeneous Poisson 
bi\/-\/vectors $\cP=\Lie_{\vec{V}}(\cP)$, we obtain an explicit formula, $\smash{\vec{\cX}}=\Or(\gamma)(\smash{\vec{V}}\otimes\smash{\cP^{\otimes^{n-1}}})$, of a $1$-\/vector cocycle $\smash{\vec{\cX}}(\gamma,\smash{\vec{V}},\cP)\in\ker\lshad\cP,\cdot\rshad$
which is built from the graph cocycles $\gamma$ uniformly 
for all homogeneous Poisson bi\/-\/vectors $\cP$ on affine manifolds~$M^{r<\infty}$. 
The cocycle $\smash{\vec{\cX}}$ is however not necessarily a $1$-\/vector representative of the coset $\smash{\vec{\cY}}$ mod $\smash{\{\vec{\cZ}}\in\ker\lshad\cP,\cdot\rshad\}$ which would trivialise the value $\cQ(\cP)=\lshad\smash{\vec{\cY}},\cP\rshad$ of Kontsevich's 
symmetries at homogeneous Poisson structures. Indeed, the Poisson cocycle $\cQ(\cP)$ can be, we show, a nonzero bi\/-\/vector on~$M^r$, whereas the bi\/-\/vector $\lshad\smash{\vec{\cX}},\cP\rshad$ is identically zero on~$M^r$ by construction.
We contrast the formulas 
of universal cocycles $\smash{\vec{\cX}}(\gamma,\smash{\vec{V}},\cP)$ and trivialising vector fields $\smash{\vec{\cY}}$ for nonzero symmetries $\dot{\cP}=\Or(\gamma)(\cP)$ by two examples, namely,
using cubic\/-\/coefficient Poisson brackets associated with the $R$-\/matrices for $\mathfrak{gl}(2)$.

This paper is organized as follows.
In \S1 
we recall 
elements of Poisson cohomology theory in the context of Kontsevich's universal deformations of bi\/-\/vectors by using the unoriented graph cocycles.
In \S2 
we phrase 
the notion of structures which are homogeneous w.r.t.\ 
a $1$-\/vector field, and we prove the main theorem.
Finally
, we illustrate the result (cf.~\cite{OpenPrb}). 

\smallskip
\noindent%
\textbf{1. Poisson cohomology and the graph complex.}\quad
A Poisson bracket $\{\cdot,\cdot\}_\cP$ on a real manifold $M$ is a bi\/-\/linear skew\/-\/symmetric bi\/-\/derivation which takes $C^\infty(M) \times C^\infty(M) \to C^\infty(M)$ and satisfies the Jacobi identity 
$\tfrac{1}{2}\sum\nolimits_{\sigma\in S_3} \{\{\sigma(f),\sigma(g)\}_\cP,\sigma(h)\}_\cP = 0$ 
for any 
$f,g,h \in C^\infty(M)$. 
The fact that both the arguments $f,g$ and their bracket $\{f,g\}_\cP$ are scalars dictates the tensor transformation law of the components $\cP^{ij}$ of a bi-vector $\cP = \sum_{i,j} \cP^{ij}(\boldsymbol{x}) \partial_i \otimes \partial_j = \frac{1}{2}\sum_{i,j} \cP^{ij}(\boldsymbol{x})(\partial_i \otimes \partial_j - \partial_j \otimes \partial_i) = \frac{1}{2} \sum_{i,j} \cP^{ij}\, \partial_i \wedge \partial_j$ whenever the structure 
is 
referred to a 
system of coordinates $\boldsymbol{x} = (x^1,\ldots,x^r)$ and $\partial_i = \partial/\partial x^i$ is a shorthand notation.

The calculus on the space of multivectors $\Gamma(\bigwedge^\bullet TM) \cong C^\infty(\Pi T^*M)$ is simplified if one uses the parity-odd coordinates $\xi_i$ along the directions $dx^i$ in the fibres of the cotangent bundle $T^*M$ over points $\boldsymbol{a} \in M$ (which are parametrized by $x^i$).
The symbol $\xi_i$ thus corresponds to $\partial/\partial x^i$ dual to $dx^i$, and bi\/-\/vectors are $\cP = \tfrac{1}{2} \sum_{i,j} \cP^{ij} \xi_i \xi_j$, so that $\{f,g\}_\cP(\boldsymbol{a}) = (f)
\smash{\cev{\partial}}/\partial x^\mu \cdot 
\smash{\vec{\partial}}/\partial \xi_\mu \, (\cP)\, 
\smash{\cev{\partial}}/\partial \xi_\nu \cdot 
\smash{\vec{\partial}}/\partial x^\nu (g)$;
here, both the coefficients $\cP^{ij}$ and derivatives $\partial/\partial x^k$ are evaluated at the point $\boldsymbol{a} \in M$ as in the left\/-\/hand side.\footnote{The dot $\cdot$ denotes the coupling of iterated variations of the objects $f$, $\cP$, and $g$ with respect to the ca\-no\-ni\-cal\-ly conjugate variables $x^i$ and $\xi_i$, see \cite{cycle17} and references therein.}

The space of multivectors is endowed with the parity\/-\/odd Poisson bracket $\schouten{\cdot,\cdot}$ (the Schouten bracket, or antibracket) of own degree $-1$.
For arbitrary multivectors $\cP, \cQ$, the formula is 
$\schouten{\cP,\cQ} = (\cP) 
\smash{\cev{\partial}}/\partial \xi_i \cdot 
\smash{\vec{\partial}}/\partial x^i (\cQ) - (\cP) 
\smash{\cev{\partial}}/\partial x^i \cdot 
\smash{\vec{\partial}}/\partial \xi_i (\cQ)$;
in particular, $\schouten{\smash{\vec{\cX}},\smash{\vec{\cY}}} = [\smash{\vec{\cX}},\smash{\vec{\cY}}]$ is the usual commutator of vector fields $\smash{\vec{\cX}}$, $\smash{\vec{\cY}}$ on $M$.
The Schouten bracket $\schouten{\cdot,\cdot}$ is shifted\/-\/graded skew\/-\/symmetric: $\schouten{\cQ,\cP} = -(-)^{(|\cP|-1)\cdot(|\cQ|-1)} \schouten{\cP,\cQ}$ for $\cP$ and $\cQ$ grading\/-\/homogeneous.
This is why, unlike the tautology $\schouten{\smash{\vec{\cX}},\smash{\vec{\cX}}} \equiv 0$, the equation $\schouten{\cP,\cP} = 0$ is a nontrivial restriction for bi\/-\/vectors $\cP$, containing 
the tri\/-\/vector in the l.-h.s.\ 
of the Jacobi identity $\frac{1}{2}\schouten{\cP,\cP}(f,g,h) = 0$ for the bracket $\{f,g\}_\cP = \schouten{\schouten{f,\cP},g}$. 
The Schouten bracket itself satisfies the graded Jacobi identity $\schouten{\cP,\schouten{\cQ,\cR}} - (-)^{(|\cP|-1)\cdot(|\cQ|-1)}\schouten{\cQ, \schouten{\cP,\cR}} = \schouten{\schouten{\cP,\cQ},\cR}$ with $\cP$ and $\cQ$ grading-homogeneous.
This identity implies that for Poisson bi\/-\/vectors $\cP$, their adjoint action by $\partial_\cP = \schouten{\cP,\cdot}$ is a differential of degree $+1$ on the space of multivectors on $M$.
The Poisson differential $\partial_\cP$ gives rise to the Poisson cohomology $H^i_\cP(M)$ of the manifold M (see \cite{Lichnerowicz77}).\footnote{%
The group $H^0_\cP(M)$ spans the Casimirs, i.e. the functions which Poisson-commute with any $f \in C^\infty(M)$; the group $H^1_\cP(M)$ consists of vector fields which preserve the Poisson structure but do not amount to 
the Hamiltonian vector fields $\smash{\vec{\cX}}_h = \schouten{\cP,h}$; 
the second group $H^2_\cP(M) \ni \cQ$ contains infinitesimal symmetries $\cP \mapsto \cP + \varepsilon \cQ + \bar{o}(\varepsilon)$ of Poisson bi-vectors,
whereas the next group $H^3_\cP(M)$ stores the obstructions to formal integration $\cP \mapsto \cP(\varepsilon) = \cP + \sum_{k\geqslant 1} \varepsilon^k \cQ_{(k)}$ of infinitesimal symmetries $\cQ = \cQ_{(1)}$ to Poisson bi-vector formal power series satisfying $\schouten{\cP(\varepsilon),\cP(\varepsilon)} = 0$.} 

If a bi\/-\/vector $\cQ = \schouten{\smash{\vec{\cX}}, \cP}$ is a trivial Poisson cocycle, then it certainly is an infinitesimal symmetry of the Jacobi identity $\smash{\frac{1}{2}}\schouten{\cP,\cP} = 0$.
But the infinitesimal change $\schouten{\smash{\vec{\cX}},\cP}$ of the tensor $\cP$ then amounts to its reparametrisation under the infinitesimal change of coordinates $\mathbf{x}'(\boldsymbol{x}) \rightleftarrows \boldsymbol{x}(\mathbf{x'})$ along the integral trajectories of the vector field $\smash{\vec{\cX}}$ on the manifold $M$.
The following fact is true for all multivectors (regardless of the concept of Poisson cohomology).

\begin{prop}\label{PropChange}
Let $\boldsymbol{a} \in M$ be a point of an $r$-dimensional manifold and $\vec{\cX} \in \Gamma(TM)$ be a vector field on it.
For every $\varepsilon \in \mathcal{I} \subseteq \mathbb{R}$ such that there is the integral trajectory bringing $\boldsymbol{b}(-\varepsilon) \coloneqq \exp(-\varepsilon \vec{\cX})(\boldsymbol{a})$ to $\boldsymbol{a}$ by the $(+\varepsilon)$-shift, and for any choice of the $r$-tuple $\boldsymbol{x} = (x^1,\ldots,x^r)$ of local coordinates in a chart $U_\alpha$ around $\boldsymbol{a} \in M$ \textup{(}and for $|\varepsilon|$ small enough for the points $\boldsymbol{b}(-\varepsilon)$ to not yet run out of the chart $U_\alpha$\textup{),} introduce a new parametrization\footnote{Actually, this is a way to construct new coordinates for \emph{all} points of $M$ near $\boldsymbol{a}$ in $U_\alpha$, i.e. not only those which lie on a piece of the integral trajectory of $\vec{\cX}$ passing through $\boldsymbol{a}$.} for the point $\boldsymbol{a}$ by using the new $r$-tuple $\mathbf{x}'$.
By definition, put $\mathbf{x}'(\boldsymbol{a}) \coloneqq \boldsymbol{x}(\boldsymbol{b}(-\varepsilon))$.
Let $\boldsymbol{\Omega}$ be any multi-vector field 
near $\boldsymbol{a}$ on $M$.
Under the reparametrization $\mathbf{x}'(\boldsymbol{x})$, the speed at which the components of $\boldsymbol{\Omega}$ at the point $\boldsymbol{a}$ change in $\varepsilon$, as $\varepsilon \to 0$, equals $\frac{d}{d\varepsilon}\big|_{\varepsilon=0} \boldsymbol{\Omega}(\boldsymbol{a}) = \schouten{\vec{\cX},\boldsymbol{\Omega}}(\boldsymbol{a})$.
In particular, a $1$-vector field $\vec{\cY}$ near $\boldsymbol{a}$ would change at $\boldsymbol{a}$ as fast as its commutator with the vector field $\vec{\cX}$\textup{:} $\frac{d}{d\varepsilon}\big|_{\varepsilon=0} \vec{\cY}(\boldsymbol{a}) = [\vec{\cX},\vec{\cY}](\boldsymbol{a})$.
\end{prop}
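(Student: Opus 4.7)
The plan is to reduce the claim to the coordinate-level form of the identity $\Lie_{\vec{\cX}} = \schouten{\vec{\cX},\cdot}$ on multi-vectors, re-expressed in the parity-odd encoding of Section~1.

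First, I would make the reparametrisation numerically explicit. Since $\boldsymbol{b}(-\varepsilon) = \exp(-\varepsilon\vec{\cX})(\boldsymbol{a})$ is the $(-\varepsilon)$-shift along the flow of~$\vec{\cX}$ and the new chart is $\mathbf{x}' = \mathbf{x}\circ\exp(-\varepsilon\vec{\cX})$, the coordinate transition on $\mathbb{R}^r$ has first-order expansion $x'^i = x^i - \varepsilon\,\mathcal{X}^i(\mathbf{x}) + O(\varepsilon^2)$. The resulting Jacobian is $\partial x'^i/\partial x^j = \delta^i_j - \varepsilon\,\partial_j\mathcal{X}^i + O(\varepsilon^2)$, and its inverse reads $\partial x^i/\partial x'^j = \delta^i_j + \varepsilon\,\partial_j\mathcal{X}^i + O(\varepsilon^2)$.

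Second, I would apply the tensor transformation law to $\boldsymbol{\Omega}$. The new components $\Omega'^{i_1\cdots i_k}(\mathbf{x}'(\boldsymbol{p}))$ are obtained from $\Omega^{j_1\cdots j_k}(\mathbf{x}(\boldsymbol{p}))$ by contraction with the product of~$k$ Jacobians. Comparing component \emph{functions} on $\mathbb{R}^r$ at the common numerical argument $\mathbf{x}(\boldsymbol{a})$ (which is the only sensible reading of the proposition's ``change at the point~$\boldsymbol{a}$'', since the coordinate-free tensor value at $\boldsymbol{a}$ is tautologically unchanged), two first-order contributions arise: the Jacobian term $-\varepsilon\sum_\ell\Omega^{\ldots j_\ell\ldots}\partial_{j_\ell}\mathcal{X}^{i_\ell}$, and the argument-shift term $+\varepsilon\,\mathcal{X}^m\partial_m\Omega^{i_1\cdots i_k}$ arising from transporting the evaluation point $\mathbf{x}'(\boldsymbol{a}) = \mathbf{x}(\boldsymbol{a}) - \varepsilon\mathcal{X}(\boldsymbol{a}) + O(\varepsilon^2)$ back to~$\mathbf{x}(\boldsymbol{a})$.

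Third, I would re-encode the result in the $(\mathbf{x},\xi)$ notation of Section~1. Writing $\boldsymbol{\Omega} = \omega(\mathbf{x},\xi)$ and $\vec{\cX} = \mathcal{X}^i\xi_i$, the sum of the two contributions above is precisely $\mathcal{X}^i\partial_i\omega - \partial_i\mathcal{X}^j\,\xi_j\,\partial_{\xi_i}\omega$ evaluated at~$\boldsymbol{a}$, which is exactly the expansion of $\schouten{\vec{\cX},\boldsymbol{\Omega}}$ produced by the Schouten-bracket formula recalled in Section~1. Specialising to $k=1$ and applying $\schouten{\vec{\cX},\vec{\cY}} = [\vec{\cX},\vec{\cY}]$ yields the final display. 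The main obstacle is purely bookkeeping: assigning the correct signs to the Jacobian and argument-shift contributions, and being vigilant that ``the components at~$\boldsymbol{a}$'' is read as the component \emph{functions} on $\mathbb{R}^r$ evaluated at the shared numerical argument $\mathbf{x}(\boldsymbol{a})$, rather than as the coordinate-free multivector value. No machinery beyond the explicit formula of Section~1 for $\schouten{\cdot,\cdot}$ is used.
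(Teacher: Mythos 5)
Your proposal is correct. Note that the paper itself offers no proof of Proposition~\ref{PropChange} --- it is stated as a standard fact (the Lie derivative of a multivector equals its Schouten bracket with the generating vector field), so there is no in-paper argument to compare against. Your computation is the standard one and the signs check out: the Jacobian contribution $-\varepsilon\sum_\ell\Omega^{\ldots j_\ell\ldots}\partial_{j_\ell}\mathcal{X}^{i_\ell}$ and the argument-shift contribution $+\varepsilon\,\mathcal{X}^m\partial_m\Omega^{i_1\cdots i_k}$ together reproduce exactly the two terms of the paper's explicit formula for $\schouten{\vec{\cX},\cdot}$ in the $(\boldsymbol{x},\xi)$ encoding, and the $k=1$ specialisation gives the commutator. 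You were also right to flag the one genuine subtlety --- that ``the components at $\boldsymbol{a}$'' must be read as the component functions compared at the shared numerical argument $\boldsymbol{x}(\boldsymbol{a})$ (so that the evaluation point effectively drifts to $\exp(+\varepsilon\vec{\cX})(\boldsymbol{a})$), since comparing the coordinate-free tensor value at the fixed point $\boldsymbol{a}$ would yield only the Jacobian term and not the full Lie derivative that the proposition asserts.
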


\noindent%
The geography of the set of Poisson structures near a given bracket $\{\cdot,\cdot\}_\cP$ on a given manifold $M^r$ is, generally speaking, unknown.
All the more it was a priori unclear whether Poisson bi-vectors $\cP$, irrespective of the dimension $r \geqslant 3$, topology of $M^r$, etc., can be infinitesimally shifted by Poisson $2$-cocycles $\cQ(\cP)$, the construction of which would be universal for all $\cP$.
The discovery of the graph complex in 1993--94 allowed Kontsevich to state (in \cite{Ascona96}) the affirmative answer to the above question. 
Namely, the graph orientation morphism $\Or(\cdot)(\cP)\colon \ker \Id \ni \gamma \mapsto \cQ(\cP) \in \ker \partial_\cP$ takes graph cocycles on $n$ vertices and $2n-2$ edges in each term (e.g., the tetrahedron, cf. \cite{f16,OrMorphism,sqs17,JNMP17}) to Poisson cocycles whenever the bi-vector $\cP$ itself is Poisson.
Willwacher \cite{WillwacherGRT} revealed that the generators of Drinfeld's Gro\-then\-dieck\/-\/-Teich\-m\"ul\-ler Lie algebra $\mathfrak{grt}$ are 
source of at least countably many such cocycles in the vertex-edge bi-grading $(n,2n-2)$; these cocycles are marked by the $(2\ell+1)$-wheel graphs (e.g., see \cite{JNMP17,DolgushevRogersWillwacher}).
Brown proved in \cite{Brown2012} that, under the Willwacher isomorphism $\mathfrak{grt} \cong H^0(\textsc{Gra})$ these graph cocycles with wheels generate a free Lie subalgebra in $\mathfrak{grt}$, which means effectively that the iterated commutators of already known cocycles -- under the bracket in the differential graded Lie algebra $\textsc{Gra}$ of graphs -- would never vanish. 
The commutator of two cocycles is a cocycle by the Jacobi identity.
All of them again being of the bi-grading $(n,2n-2)$, these graph cocycles determine countably many infinitesimal symmetries of a given Poisson bi-vector $\cP$; the construction is uniform 
for all the geometries $(M^r, \cP)$.

\begin{lemma}\label{LemmaCommutatorGraSym}
For a given Poisson bi-vector $\cP$, the graph orientation mapping $\Or(\cdot)(\cP)\colon \ker \Id \ni \gamma \mapsto \cQ(\cP) \in \ker \partial_\cP$ is a Lie algebra morphism that takes the bracket of two cocycles in bi-grading $(n,2n-2)$ to the commutator $[\frac{d}{d\varepsilon_1}, \frac{d}{d\varepsilon_2}](\cP)$ of two symmetries $\frac{d}{d\varepsilon_i}(\cP) = \cQ_i(\cP)$.\footnote{%
By Brown 
\cite{Brown2012}, the commutator does in general not vanish for Willwacher's odd-sided wheel cocycles.}
\end{lemma}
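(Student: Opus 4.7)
The plan is to unfold both sides of the asserted identity and match them term by term by a Leibniz computation against the graph\/-\/insertion definition of the bracket. First I would recall that for homogeneous elements $\gamma_1,\gamma_2\in\textsc{Gra}$ the bracket reads $[\gamma_1,\gamma_2]=\gamma_1\bullet\gamma_2-(-)^{|\gamma_1|\cdot|\gamma_2|}\gamma_2\bullet\gamma_1$, where $\gamma_1\bullet\gamma_2=\sum_{v\in V(\gamma_1)}\gamma_1\circ_v\gamma_2$ and $\gamma_1\circ_v\gamma_2$ is the signed sum over all ways of erasing a vertex~$v$ of~$\gamma_1$, plugging in the whole subgraph~$\gamma_2$ in its place, and reattaching the half\/-\/edges of~$\gamma_1$ at~$v$ to the vertices of~$\gamma_2$. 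For cocycles of bi\/-\/grading $(n,2n-2)$ the Koszul sign equals $+1$, so $[\gamma_1,\gamma_2]=\gamma_1\bullet\gamma_2-\gamma_2\bullet\gamma_1$; the output again lies in bi\/-\/grading $(n_1+n_2-1,\,2(n_1+n_2-1)-2)$, so $\Or$ produces a bi\/-\/vector out of it and the r.\/-\/h.\/s.\ of the claimed equality is of the right type.

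Next I would compute the l.\/-\/h.\/s.\ by differentiating along one flow. With $\cP(\veps_2)=\cP+\veps_2\,\cQ_2(\cP)+\bar{o}(\veps_2)$, the Leibniz rule distributes $\tfrac{d}{d\veps_2}\big|_0$ across the $n_1$ tensor slots of $\smash{\cP^{\otimes n_1}}$ inside $\Or(\gamma_1)$:
\[
\tfrac{d}{d\veps_2}\big|_0\,\Or(\gamma_1)\bigl(\cP(\veps_2)^{\otimes n_1}\bigr)=\sum_{k=1}^{n_1}\Or(\gamma_1)\bigl(\cP^{\otimes(k-1)}\otimes\Or(\gamma_2)(\cP^{\otimes n_2})\otimes\cP^{\otimes(n_1-k)}\bigr).
\]
The central identification is that the $k$-th summand equals $\Or(\gamma_1\circ_k\gamma_2)(\cP^{\otimes(n_1+n_2-1)})$: placing the multidifferential operator $\Or(\gamma_2)(\cP^{\otimes n_2})$ into the slot at vertex~$k$ of $\Or(\gamma_1)$ forces the two $\xi$\/-\/derivatives of $\gamma_1$ that had acted at vertex~$k$ to redistribute, via Leibniz, over the $n_2$ vertices of the inserted~$\gamma_2$, and the sum over these reattachments of half\/-\/edges is precisely the combinatorics of the graph insertion~$\circ_k$. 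Summing over~$k$ and antisymmetrising in $(\gamma_1,\veps_1)\leftrightarrow(\gamma_2,\veps_2)$ yields
\[
\bigl[\tfrac{d}{d\veps_1},\tfrac{d}{d\veps_2}\bigr](\cP)=\Or(\gamma_1\bullet\gamma_2-\gamma_2\bullet\gamma_1)\bigl(\cP^{\otimes(n_1+n_2-1)}\bigr)=\Or\bigl([\gamma_1,\gamma_2]\bigr)\bigl(\cP^{\otimes(n_1+n_2-1)}\bigr).
\]

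The main obstacle is the signed bookkeeping in the identification above. The orientation morphism~$\Or$ is built from a choice of linear ordering on the edges of each graph (equivalently, a wedge ordering of the dual $\xi$\/-\/variables), and reshuffling these orderings when the half\/-\/edges formerly at~$v$ are spread over the inserted $\gamma_2$ produces Koszul signs; one must verify that these agree, summand by summand, with the signs in the definitions of $\gamma_1\bullet\gamma_2$ and of the graph bracket, for which the even total degree of cocycles in bi\/-\/grading $(n,2n-2)$ is what makes the global parity work out. Once this is settled, the lemma follows by linearity over the vector space of graph cocycles, and the Jacobi identity for the commutator of flows then reduces to the Jacobi identity in~$\textsc{Gra}$; in particular, by Brown's theorem cited above, the commutator stays nonzero for distinct odd\/-\/wheel cocycles, so the symmetries they induce on~$\cP$ do not commute.
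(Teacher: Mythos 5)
The paper offers no proof of this lemma at all: it is stated as a known fact, with the Lie-algebra-morphism property of $\Or$ deferred implicitly to the literature (\cite{OrMorphism,Jost2013,WillwacherGRT}) and only the non-vanishing of the commutator attributed to \cite{Brown2012}. Your sketch is the standard argument and is essentially correct: differentiating $\Or(\gamma_1)(\cP^{\otimes n_1})$ along the flow $\dot{\cP}=\cQ_2(\cP)$ does produce, by the Leibniz rule over the tensor slots, the sum $\sum_{k}\Or(\gamma_1)\bigl(\cP^{\otimes(k-1)}\otimes\cQ_2(\cP)\otimes\cP^{\otimes(n_1-k)}\bigr)$, and identifying the $k$-th summand with $\Or(\gamma_1\circ_k\gamma_2)$ evaluated at $\cP^{\otimes(n_1+n_2-1)}$ is exactly the content of the orientation-morphism property. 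Two points to tighten. First, when the bi-vector $\cQ_2(\cP)$ is inserted at vertex $k$, it is not only the out-going $\xi$-legs but \emph{all} half-edges of $\gamma_1$ incident to $k$ --- in particular the in-coming $\vec{\partial}/\partial x^\mu$ couplings from the other vertices of $\gamma_1$ --- that redistribute by Leibniz over the $n_2$ new vertices; it is this full redistribution that reproduces the sum over reattachments defining $\circ_k$. Second, with your labelling the derivative $\tfrac{d}{d\veps_2}$ applied to $\cQ_1(\cP)$ yields $\Or(\gamma_1\bullet\gamma_2)$, so the commutator $[\tfrac{d}{d\veps_1},\tfrac{d}{d\veps_2}](\cP)$ comes out as $\pm\Or([\gamma_1,\gamma_2])(\cP^{\otimes(n_1+n_2-1)})$ with the sign fixed by the ordering convention; since cocycles in bi-grading $(n,2n-2)$ sit in degree zero of $\textsc{Gra}$, this is only an overall sign and does not affect the claim. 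The edge-ordering sign bookkeeping you flag as the main obstacle is indeed where the real work lies, and it is carried out in detail in \cite{OrMorphism} and \cite{Jost2013}.
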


\noindent%
By construction, the components of universal symmetry bi-vectors $\cQ(\cP)$ are differential polynomials w.r.t.\ 
the components $\cP^{ij}$ of the Poisson bi-vector $\cP$ that evolves.
It can of course be that a graph flow $\dot{\cP} = \Or(\gamma)(\cP)$ vanishes identically over the manifold $M^r$ whenever $\cQ$ is evaluated at a particular class of Poisson structures $\cP$.\footnote{%
\textbf{Example.} 
So it is for the Kontsevich tetrahedral flow (\cite{Ascona96} and \cite{f16
}) evaluated at the Kirillov\/--\/Kostant linear Poisson brackets on the duals $\mathfrak{g}^*$ of Lie algebras because in every term within the cocycle $\cQ(\cP)$ under study, at least one copy is $\cP$ is differentiated at least twice with respect to the global coordinates on $\mathfrak{g}^*$.}
Nevertheless, there is no 
mechanism which would force a given Kontsevich's graph flow to vanish at all Poisson structures on all manifolds of all dimensions.%
\footnote{\label{Foot3DdetPoisson}\textbf{Example.}  
The Poisson bi-vectors $\cP = \Id a_1\wedge\ldots\wedge \Id a_m/\dvol(\mathbb{R}^{m+2})$ of Nambu type with arbitrary Casimirs $a_1,\ldots,a_m \in C^\infty(\mathbb{R}^{m+2})$ and an arbitrary density in the volume element can have polynomial components $\cP^{ij} \in \mathbb{R}[x^1,\ldots,x^{m+2}]$ of degrees as high as need be w.r.t.\ 
the global Cartesian coordinates $x^\alpha$ on the vector space $\mathbb{R}^{m+2}$.
The universal symmetries $\dot{\cP} = \Or(\gamma)(\cP)$ obtained from Kontsevich's graph cocycles deform the symplectic foliation (which is given in $\mathbb{R}^{m+2}$ by the intersections of the level sets for the Casimirs $a_1,\ldots,a_m$) in a regular way on an open dense subset of $\mathbb{R}^{m+2}$, so that the symmetries $\dot{\cP} = \cQ(\cP)$ preserve this Nambu class of Poisson brackets: the flows force the evolution of the Casimirs and the volume density. Its integrability is an open problem; by Lemma \ref{LemmaCommutatorGraSym} and \cite{Brown2012}, the evolutions induced by different graph cocycles do not commute.
}
Independently, it remains an open problem (cf. \cite{OpenPrb}) whether there is a Poisson manifold $(M^r, \cP)$ and a graph cocycle $\gamma$ such that the Poisson cohomology class of 
$\cQ(\cP) \mathrel{{:}{=}} 
\Or(\gamma)(\cP)$ would be \emph{nontrivial} in $H^2_\cP(M)$.
In other words, for all the shifts $\cQ = \Or(\gamma)$ and all Poisson bi\/-\/vectors tried so far, the Poisson coboundary equation $\cQ(\cP) = \schouten{\smash{\vec{\cX}},\cP}$ did have vector field solutions $\smash{\vec{\cX}}$ on the 
manifolds~$M$.

\begin{rem}
Obtained from the graphs $\gamma \in \ker \Id$, the 
symmetries $\cQ(\cP) = \Or(\gamma)(\cP) \in \ker \schouten{\cP,\cdot}$ are independent of a choice of local coordinates $x^i$ (hence $\xi_i$) on a chart if, the Kontsevich construction requires, the manifold $M^r$ is endowed with an \emph{affine} structure: all the coordinate transformations amount to $\mathbf{x}' = A\boldsymbol{x}+\smash{\vec{\boldsymbol{b}}}$ with a constant (over the intersection of charts) Jacobian matrix $A$.
The parity-odd fibre variables are transformed
using the inverse Jacobian matrix, $\xi_i = A^{i'}_{i} \xi'_{i'}$, making sense of the couplings $\cev{\partial}/\partial \xi_i \cdot \vec{\partial}/\partial x^i$ which decorate the oriented edges of Kontsevich's graphs after the morphism $\Or$ works (see \cite{OrMorphism,Ascona96}
). 
The problem of Poisson cohomology class (non)triviality for the Kontsevich infinitesimal symmetries $\dot{\cP} = \cQ(\cP) \in \ker \schouten{\cP,\cdot}$ thus acquires two diametrally opposite interpretations:\\[1pt]
\mbox{ }\quad\textbf{\textit{1}} (as in \cite{Ascona96})\textbf{.}\quad 
The Poisson manifold $M^{r<\infty}$ is equipped with \emph{both} the smooth and affine structures.\footnote{On the circle $\mathbb{S}^1$, the affine coordinate `angle' is obvious whereas the smooth structure is used in the realm of Poincar\'e topology. A smooth atlas is always available for the spheres $\mathbb{S}^r$, but not for all $r \in \mathbb{N}$ would the $r$-dimensional sphere admit an affine structure.
}
By definition, two Poisson bi-vectors are equivalent, $\cP_1 \sim \cP_2$, if they are related by a diffeomorphism of the manifold $M$: using its smooth structure, the diffeomorphism iden\-ti\-fies points in two copies of $M$, then relating the Poisson tensors by local coordinate re\-pa\-ra\-met\-ri\-za\-ti\-ons near the respective points.
The affine structure on $M$ is now used to run the Kontsevich flows in two initial value 
problems $\dot{\cP_i}(\varepsilon) = \cQ(\cP_i(\varepsilon))$, $\cP_i(\varepsilon=0) = \cP_i$.
The Poisson triviality $\cQ(\cP(\varepsilon)) = \schouten{\vec{\cX}(\varepsilon), \cP(\varepsilon)}$ would relate either of bi-vectors $\cP_i(\varepsilon)$ back to the Cauchy datum $\cP_i$ by diffeomorphisms (as long as $|\varepsilon|$ is small enough).
Consequently, the Poisson bi-vectors $\cP_1(\varepsilon) \sim \cP_2(\varepsilon)$ do not run out of the old equivalence class.
In conclusion, the 
goal is to produce essentially new Poisson brackets by using a 
nontrivial cocycle $\cQ$, two given structures on the manifold $M^r$, and its diffeomorphism. 
No examples of nontrivial action, so that $\cP_2(\varepsilon) \not\sim \cP_i 
\not\sim \cP_1(\varepsilon)$ at $\varepsilon > 0$, have ever been produced since 1996 (see~\cite{DolgushevRogersWillwacher,Ascona96}). 
\\
\mbox{ }\quad\textbf{\textit{2}} (as in \cite{OpenPrb})\textbf{.}\quad 
The Poisson manifold $M^{r<\infty}$ is equipped only with an affine structure.
The countably many $\mathfrak{grt}$-related graph cocycles on $n$ vertices and $2n-2$ edges in every term (the tetrahedron, the pentagon\/-\/wheel cocycle, etc., see \cite{JNMP17,
WillwacherGRT}) 
generate a noncommutative Lie algebra of infinitesimal symmetries $\cQ(\cP) = \Or(\gamma)(\cP)$ for a given Poisson structure $\cP$.
Consider the extreme case when \emph{all} the cocycles $\cQ(\cP) \in \ker \schouten{\cP,\cdot}$ are 
exact in the cohomology group $H^2_\cP(M)$ w.r.t.\ the Poisson differential $\partial_\cP$.
This assumption 
gives rise to the countable set of vector fields $\smash{\vec{\cY}}(\gamma,\cP)$ on $M$ such that $\cQ(\cP) = \schouten{\smash{\vec{\cY}},\cP}$.
(Some of these vector fields can be identically zero over $M$.)
But if at least one such vector field is not constant w.r.t.\ the affine structure on $M$, then the shifts along its integral trajectories are nonlinear diffeomorphisms of $M$.
The evolution of bi-vector $\cP$ is $\dot{\cP} = \cQ(\cP) = \schouten{\smash{\vec{\cY}},\cP}$ or similarly, $\dot{\Omega} = \schouten{\smash{\vec{\cY}},\Omega}$ for any multi-vector $\Omega$ on $M$ (see Proposition~\ref{PropChange}); this evolution is now seen as mutlivectors' response to the diffeomorphism whose construction refers only to the simple, affine local portrait of $M$.
Summarizing, the store of flows $\Or(\gamma)(\cP)$ from the $\mathfrak{grt}$-related graph cocycles $\gamma$ could be enough to approximate arbitrary smooth vector fields on $M^r$, that is, imitate its smooth structure. 
Whether this theoretical possibility is actually realised in relevant Poisson models is an open problem.
\end{rem}

The Kontsevich symmetry construction is, therefore, either a generator of new Poisson brackets or the mechanism that provides diffeomorphisms of the underlying manifold.

\smallskip
\noindent%
\textbf{2. Homogeneous Poisson structures.}\quad
By definition, a bi\/-\/vector $\cP$ on a manifold $M$ is called \emph{homogeneous} (of 
scale 
$\lambda$) with respect to a vector field $\smash{\vec{\cV}}$ on $M$ if $\schouten{\smash{\vec{\cV}},\cP} = \lambda \cdot \cP$.

\begin{example}
Let $M = \mathbb{R}^r$ be a vector space (only linear reparametrizations $\mathbf{x}' = A\boldsymbol{x}$ are allowed, so that the polynomial degrees of monomials in the ring $\mathbb{R}[x^1,\ldots,x^r]$ is well defined).
Introduce the Euler vector field $\smash{\vec{\cE}} = \smash{\sum_{i=1}^r} x^i\,\partial/\partial x^i$, and let all the components $\cP^{ij}$ of a bi-vector $\cP$ be homogeneous polynomials of degree $d$ in the variables $x^i$.
Then we have that $\schouten{\smash{\vec{\cE}},\cP} = (d-2)\cdot\cP$, which means that $\cP$ is homogeneous of scale 
$d-2$ w.r.t.\ the Euler vector field $\smash{\vec{\cE}}$.
In particular, if $d \neq 2$ (i.e.\ if the coefficients of bi-vector $\cP$ are not quadratic), then we set $\smash{\vec{\cV}} = (d-2)^{-1} \cdot \smash{\vec{\cE}}$ and from the equality $\cP = \schouten{\smash{\vec{\cV}},\cP}$ we obtain that the same bi-vector $\cP$ has homogeneity scale 
$\lambda=1$ w.r.t.\ 
the multiple $\smash{\vec{\cV}}$ of the Euler vector field $\smash{\vec{\cE}}$ on~
$\mathbb{R}^r$.
\end{example}

\begin{example}
Under the same assumptions, suppose further that $\gamma = \sum_a c_a \gamma_a 
$ is a graph cocycle with $n$ vertices and $2n-2$ edges in every term $\gamma_a$ (e.g., take the tetrahedron).
Orient the ordered (by First $\prec\cdots\prec$ Last) edges in every $\gamma_a$ using the edge decoration operators $\smash{\vec{\Delta}}_{ij} = \smash{\sum_{\mu=1}^r} (\smash{\vec{\partial}}/\partial \xi_\mu^{(i)} \otimes \smash{\vec{\partial}}/\partial x^\mu_{(j)} + \smash{\vec{\partial}}/\partial\xi_\mu^{(j)} \otimes \smash{\vec{\partial}}/\partial x^\mu_{(i)})$.
By placing a copy of bi\/-\/vector $\cP = \smash{\frac{1}{2}} \cP^{kl}(\boldsymbol{x})\xi_k\xi_l$ in each vertex $\smash{v^{(i)}}$ of $\gamma_a$ and taking the sum (over the graph index $a$) of products of the content of vertices in $\gamma_a$ after all the edge operators $\smash{\vec{\Delta}_{ij}}$ work, we obtain\footnote{We refer to the original paper \cite{Ascona96} and to \cite{OrMorphism} 
for illustrations and discussion how the graph orientation morphism works in practice.} the bi-vector $\cQ(\cP) \mathrel{{:}{=}} \Or(\gamma)(\cP)$.
Then the coefficients of the bi\/-\/vector $\cQ(\cP)$ are homogeneous polynomials of degree $n\cdot d - (2n-2)$ with respect to $x^1$, $\ldots$, $x^r$, so that $\schouten{\smash{\vec{\cE}}, \cQ(\cP)} = n(d-2) 
\cQ(\cP)$.
In particular, if $d \neq 2$, then $\schouten{\smash{\vec{\cV}},\cQ(\cP)} = n \cdot \cQ(\cP)$, whereas quadratic\/-\/coefficient bi\/-\/vectors $\cP$ (with $d=2$) are deformed within their subspace by the quadratic bi\/-\/vectors $\cQ(\cP)$ which are obtained from the Kontsevich graph cocycles.
\end{example}

\begin{lemma}\label{LemmaVertexNumber}
If a Poisson bi\/-\/vector $\cP=\schouten{\smash{\vec{\cV}},\cQ(\cP)}$ is homogeneous and $\cQ(\cP)=\Or(\gamma)(\smash{\cP^{\otimes^{n}}})$ is built from a graph cocycle~$\gamma$ on $n$ vertices, now containing a copy of $\cP$ in each vertex, then the bi\/-\/vector $\cQ(\cP)$ is also homogeneous\textup{:} $\schouten{\smash{\vec{\cV}},\cQ(\cP)} = n \cdot \cQ(\cP)$, 
so that its scale is
~$n$.\footnote{The proof amounts to the Leibniz rule: let us inspect how fast the bi\/-\/vector $\cQ(\cP)$, which by construction is a homogeneous differential polynomial of degree $n$ in~$\cP$, evolves along 
the vector field~$\smash{\vec{\cV}}$.}
\end{lemma}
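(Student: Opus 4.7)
The plan is to exploit the fact, emphasized in the footnote, that $\cQ(\cP)=\Or(\gamma)(\cP^{\otimes n})$ is a homogeneous differential polynomial of degree $n$ in the bi-vector $\cP$, together with the derivation property of the Lie derivative $\Lie_{\vec{\cV}}=\schouten{\vec{\cV},\cdot}$ on the space of multi-vectors (which is a consequence of the graded Jacobi identity for the Schouten bracket recalled in \S1).

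First I would view the graph orientation morphism as a multilinear polydifferential operator $\Or(\gamma)\colon\cP^{\otimes n}\mapsto \Or(\gamma)(\cP^{\otimes n})$ in which the $i$-th tensor factor is the copy of $\cP$ placed at the $i$-th vertex of $\gamma_a$. By Proposition~\ref{PropChange}, the Lie derivative $\Lie_{\vec{\cV}}$ computes the infinitesimal response of any multi-vector to the flow of $\vec{\cV}$; and since $\Or(\gamma)(\cdot^{\otimes n})$ is a natural construction on the affine manifold $M$, the assignment $\cP\mapsto\cQ(\cP)$ intertwines this flow, giving the Leibniz-style identity
$$\Lie_{\vec{\cV}}\bigl(\Or(\gamma)(\cP^{\otimes n})\bigr)=\sum_{i=1}^{n}\Or(\gamma)\bigl(\cP\otimes\cdots\otimes\Lie_{\vec{\cV}}\cP\otimes\cdots\otimes\cP\bigr),$$
where $\Lie_{\vec{\cV}}\cP$ occupies the $i$-th slot. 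Substituting the homogeneity hypothesis $\Lie_{\vec{\cV}}\cP=\schouten{\vec{\cV},\cP}=\cP$, each of the $n$ summands equals $\Or(\gamma)(\cP^{\otimes n})=\cQ(\cP)$, and the sum is $n\cdot\cQ(\cP)$.

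The main obstacle is justifying the displayed Leibniz formula, i.e.\ showing that $\Lie_{\vec{\cV}}$ passes through the edge operators $\vec{\Delta}_{ij}$ without anomalous terms involving derivatives of $\vec{\cV}$. The cleanest way is to integrate the infinitesimal statement into a scaling argument: the hypothesis $\Lie_{\vec{\cV}}\cP=\cP$ means that the (formal) flow $\phi_\veps$ of $\vec{\cV}$ pulls back $\cP$ to $\phi_\veps^{*}\cP=e^{\veps}\cP$; since $\Or(\gamma)(\cdot^{\otimes n})$ is a homogeneous polydifferential operator of total degree $n$ in its tensor argument, multilinearity yields
$$\phi_\veps^{*}\cQ(\cP)=\Or(\gamma)\bigl((\phi_\veps^{*}\cP)^{\otimes n}\bigr)=\Or(\gamma)\bigl((e^{\veps}\cP)^{\otimes n}\bigr)=e^{n\veps}\cQ(\cP),$$
and differentiating at $\veps=0$ returns $\Lie_{\vec{\cV}}\cQ(\cP)=n\cdot\cQ(\cP)$, which is the claim. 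The identification $\phi_\veps^{*}\cQ(\cP)=\Or(\gamma)((\phi_\veps^{*}\cP)^{\otimes n})$ is where naturality of $\Or$ with respect to the $\vec{\cV}$-flow is used; in the model examples ($\vec{\cV}$ proportional to the Euler field) the flow is affine so this is immediate, and in the stated generality one reduces to this by treating $\cQ(\cP)$ as a formal differential-polynomial expression in $\cP$ to which the derivation $\Lie_{\vec{\cV}}$ is applied termwise.
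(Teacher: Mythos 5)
Your argument is essentially the paper's own proof: the footnote to the Lemma invokes exactly this Leibniz rule for the evolution of the degree-$n$ differential polynomial $\cQ(\cP)=\Or(\gamma)(\cP^{\otimes n})$ along $\vec{\cV}$, each of the $n$ vertex slots contributing one copy of $\Lie_{\vec{\cV}}\cP=\cP$, whence the factor $n$. The subtlety you flag --- that $\Lie_{\vec{\cV}}$ must pass through the edge operators $\vec{\Delta}_{ij}$ without anomalous terms in the derivatives of $\vec{\cV}$, which is automatic only when the flow of $\vec{\cV}$ is compatible with the affine structure (as for the Euler field in the examples) --- is genuine and is not addressed in the paper either, so your write-up is, if anything, more explicit than the original about where the claim is actually being used.
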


\begin{rem}[{\cite[Rem.~4.9]{PichereauIsolated}}]
Consider a Nambu\/-\/type Poisson bi-vector $\cP = \Id a/ \Id x \Id y \Id z$ on $\mathbb{R}^3$ with Cartesian coordinates $x,y,z$\textup{;} here $a \in \mathbb{R}[x,y,z]$ is a weight\/-\/homogeneous polynomial with an isolated singularity at the origin%
\footnote{The Milnor number is the dimension 
$\dim_\mathbb{R} \mathbb{R}[x,y,z]/(\partial a/\partial x, \partial a/\partial y, \partial a/\partial z)$ 
-- 
here, $<\infty$ by assumption
.}%
\textup{,} so that $(w_{(x)} \cdot x \dd/\dd x + w_{(y)} \cdot y \dd/\dd y + w_{(z)} \cdot z \dd/\dd z)(a) = w_{(a)} \cdot a$.
Then a vector field $\smash{\vec{\cV}}$ with polynomial components satisfying the first\/-\/order PDE $\cP = \schouten{\smash{\vec{\cV}},\cP}$ exists if and only if\,{}\footnote{This means that not all Nambu\/-\/type Poisson bi-vectors $\cP = \Id a/ \Id x \Id y \Id z$ are homogeneous w.r.t.\ a vector field $\smash{\vec{\cV}}$ with polynomial components; the PDE $\cP = \schouten{\smash{\vec{\cV}},\cP}$ with polynomial coefficients and unknown $\smash{\vec{\cV}}$ can in principle admit non\/-\/polynomial solutions.}
the weight degree $w_{(a)}$ of the polynomial $a$ is not equal to the sum $w_{(x)} + w_{(y)} + w_{(z)}$ of weight degrees for the variables $x,y,z$.\footnote{\textbf{Example.}\ %
If the weights of $(x,y,z)$ are $(1,1,1)$ and $a = \smash{\frac{1}{3}}(x^3+y^3+z^3)$ is cubic\/-\/homogeneous, then the components of Poisson bi-vector $\cP$ are quadratic and (by the above and also by \cite[Exerc.~4.5.7c]{VanhaeckeBookPoisson}) not of the form $\cP = \schouten{\smash{\vec{\cV}}, \cP}$ for any polynomial\/-\/coefficient vector field $\smash{\vec{\cV}}$.
The non\/-\/existence of a solution $\smash{\vec{\cV}}$ with smooth non\/-\/polynomial coefficients is a separate problem.}
\end{rem}


Summarizing, the homogeneity assumption about 
bi\/-\/vectors $\cP$ is restrictive; it is not always satisfied in Poisson models.

\begin{theor}
Let $(M,\cP)$ be an affine finite\/-dimensional real Poisson manifold 
with $\cP=\lshad\vec{\cV},\cP\rshad$ homogeneous.
Let $\gamma=\sum_a c_a\cdot\gamma_a$ be a graph cocycle
consisting of unoriented graphs~$\gamma_a$ over $n$ vertices and $2n-2$ edges
\textup{(}with a fixed ordering of edges in each~$\gamma_a$\textup{).}
Then the $1$-vector $\smash{\vec{X}}(\gamma,\vec{\cV},\cP) = \Or(\gamma)(\smash{\vec{\cV}\otimes\cP^{\otimes^{n-1}}})$,
which is obtained by representing each edge $i{-}\!\!{-}j$ with the operator $\smash{\vec{\Delta}_{ij}}$ and 
by \textup{(}graded-\textup{)}\/symmetrizing over all the ways 
$\sigma\in\mathbb{S}_n$ to send 
the $n$-\/tuple $\smash{\vec{\cV}\otimes\cP^{\otimes^{n-1}}}$ into the $n$ vertices in each~$\gamma_a$,
is a Poisson cocycle\textup{:}
$\smash{\vec{\cX}}\in\ker\lshad\cP,\cdot\rshad$.\footnote{
\textbf{Open problem} (for $\cP$ homogeneous and Poisson
)\textbf{.} 
Is
the universal 
$1$-\/vector field 
$\smash{\vec{\cX}}(\gamma,\vec{\cV},\cP)\in\ker\dd_{\cP}$ 
Hamiltonian, i.e.\ $\smash{\vec{\cX}}=\lshad\cP,h\rshad$ for $h\in C^\infty(M)$ or at least, $\smash{\vec{\cX}}=\cP\inner\eta$
for a maybe not exact $1$-\/form $\eta$ on $M$\,?
}\\
\mbox{ }\quad
The vector field $\smash{\vec{\cX}}$ 
is defined up to adding arbitrary Poisson $1$-\/cocycles $\smash{\vec{\cZ}}\in\ker\lshad\cP,\cdot\rshad$.%
\end{theor}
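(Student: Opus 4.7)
The plan is to identify $\smash{\vec{\cX}}$ with the first-order term in the formal expansion of Kontsevich's universal Poisson $2$-cocycle $\cQ(\cP) = \Or(\gamma)(\cP^{\otimes n})$ under the mixed-degree deformation $\pi_s \coloneqq \cP + s\,\smash{\vec{\cV}}$ of $\cP$ by the homogeneity field $\smash{\vec{\cV}}$, and to deduce $\schouten{\cP,\smash{\vec{\cX}}}=0$ by differentiating the Kontsevich cocycle identity for $\pi_s$ at $s=0$ and invoking Lemma~\ref{LemmaVertexNumber}.

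The crucial preliminary observation is that $\pi_s$ formally satisfies the Maurer--Cartan equation $\schouten{\pi_s,\pi_s}=0$ identically in $s$: the Jacobi identity $\schouten{\cP,\cP}=0$ kills the $s^0$-part, $\schouten{\smash{\vec{\cV}},\smash{\vec{\cV}}}=[\smash{\vec{\cV}},\smash{\vec{\cV}}]=0$ kills the $s^2$-part, and the $s^1$ cross-terms $\schouten{\cP,\smash{\vec{\cV}}}+\schouten{\smash{\vec{\cV}},\cP}$ cancel by graded skew-symmetry. The orientation morphism $\Or$ extends naturally to mixed-arity inputs at vertices, since $(\smash{\vec{\partial}}/\partial\xi)^2(\smash{\vec{\cV}})=0$ automatically kills any configuration asking a $\smash{\vec{\cV}}$-vertex to supply two outgoing $\xi$'s. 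Then $F(s)\coloneqq\Or(\gamma)(\pi_s^{\otimes n})$ is polynomial in $s$ with $F(0)=\cQ(\cP)$, and $F'(0)$ equals the theorem's $\smash{\vec{\cX}}$ up to the $(n-1)!$ combinatorial factor coming from the graded-symmetrization convention.

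The main ingredient is the standard graph-theoretic identity underlying Kontsevich's proof that $\cQ(\cP)\in\ker\partial_\cP$: for any polyvector $\pi$ and any graph $\Gamma$ on $n$ vertices,
\[
\schouten{\pi,\Or(\Gamma)(\pi^{\otimes n})}=\tfrac{1}{2}\Or(\Gamma)\bigl(\schouten{\pi,\pi}\otimes\pi^{\otimes n-1}\bigr)_{\mathrm{sym}}-\Or(\Id\Gamma)(\pi^{\otimes n+1}).
\]
It is obtained by attaching a new vertex with one extra edge to $\Gamma$ and grouping the $(n+1)$-vertex graph sum into Jacobi and blow-up contributions. The derivation uses only the Leibniz rule for the Schouten bracket and the graph combinatorics, so the identity extends to the mixed-degree $\pi=\pi_s$ (with the appropriate generalization of $\Id$ to graphs with vertices of heterogeneous outgoing arity). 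With $\schouten{\pi_s,\pi_s}=0$ and $\Id\gamma=0$, the right-hand side vanishes, and extracting the $s^1$-coefficient of the resulting identity $\schouten{\pi_s,F(s)}=0$ via the Leibniz rule for the Schouten bracket and Lemma~\ref{LemmaVertexNumber} ($\schouten{\smash{\vec{\cV}},\cQ(\cP)}=n\cdot\cQ(\cP)$) yields, after matching the combinatorial normalizations of the $(\cdot)_{\mathrm{sym}}$ symmetrization with the theorem's, the desired cocycle equation $\schouten{\cP,\smash{\vec{\cX}}}=0$.

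The main obstacle is the rigorous justification of the Kontsevich identity for mixed-degree polyvectors $\pi_s$ together with the careful bookkeeping of combinatorial factors, particularly the mixed-arity generalization of the graph-complex differential $\Id$. An alternative, purely graph-theoretic route, expands $\schouten{\cP,\smash{\vec{\cX}}}$ directly as a sum over attachments of a new $\cP$-vertex to each $\gamma_a$ by one extra edge, and groups the terms into three classes: (i) attachments producing $\schouten{\cP,\cP}=0$ at a $\cP$-vertex vanish by Jacobi; (ii) attachments reassembling into $\Or(\Id\gamma)(\smash{\vec{\cV}}\otimes\cP^{\otimes n})_{\mathrm{sym}}$ vanish by the graph cocycle condition; (iii) attachments at the $\smash{\vec{\cV}}$-vertex, where $\schouten{\cP,\smash{\vec{\cV}}}=-\cP$, cancel against the combinatorially-matched contribution from Lemma~\ref{LemmaVertexNumber}. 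The closing remark that $\smash{\vec{\cX}}$ is defined up to adding any $\smash{\vec{\cZ}}\in\ker\schouten{\cP,\cdot}$ is immediate.
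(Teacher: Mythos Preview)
Your primary route via the deformation $\pi_s=\cP+s\,\smash{\vec{\cV}}$ contains a genuine error, not merely a bookkeeping issue. If the displayed identity held for the inhomogeneous element $\pi_s$ as written, then from $\schouten{\pi_s,\pi_s}=0$ and $\Id\gamma=0$ you would extract at order~$s^1$ the equation $\schouten{\cP,F'(0)}+\schouten{\smash{\vec{\cV}},\cQ(\cP)}=0$, hence $\schouten{\cP,\smash{\vec{\cX}}}\propto -\cQ(\cP)$. That is \emph{false}: in the paper's $\mathfrak{gl}_2$ example one has $\smash{\vec{\cX}}\equiv 0$ while $\cQ(\cP)\not\equiv 0$. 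The hidden problem is that the operadic identity underlying Kontsevich's argument is formulated with the graded\/-\emph{symmetric} bracket $\pi_S(F,G)=(-)^{|F|-1}\schouten{F,G}$, and the sign $(-)^{|F|-1}$ is degree\/-\/dependent, so it does not commute with substituting the inhomogeneous $\pi_s$. Concretely, although $\schouten{\pi_s,\pi_s}=0$, one computes $\pi_S(\cP,\smash{\vec{\cV}})+\pi_S(\smash{\vec{\cV}},\cP)=-\schouten{\cP,\smash{\vec{\cV}}}+\schouten{\smash{\vec{\cV}},\cP}=2\schouten{\smash{\vec{\cV}},\cP}=2\cP$, whence $\pi_S(\pi_s,\pi_s)=2s\,\cP+O(s^2)\neq 0$. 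Feeding this non\/-\/vanishing term through $\Or(\gamma)$ produces exactly the $n\cdot\cQ(\cP)$ that cancels the Lemma~\ref{LemmaVertexNumber} contribution $\pi_S(\cQ(\cP),\smash{\vec{\cV}})=n\cQ(\cP)$---which is precisely the cancellation mechanism in the paper's proof. So the $\pi_s$\/-\/trick can be repaired, but only after restoring $\pi_S$ and its non\/-\/vanishing self\/-\/bracket; at that point it becomes a repackaging of the paper's argument rather than a shortcut around it.

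Your alternative route---expanding via the insertion identity and sorting terms into Jacobi, graph\/-\/cocycle, and homogeneity contributions---is essentially the paper's own proof. The paper starts from $0=\bigl(\pi_S\,\smash{\vec{\circ}}\,\Or(\gamma)-(-)^N\Or(\gamma)\,\smash{\vec{\circ}}\,\pi_S\bigr)(\smash{\vec{\cV}}\otimes\cP^{\otimes n})$, discards the $\pi_S(\cP,\cP)=0$ terms, and observes that the survivors are $n\cQ(\cP)$ from $\pi_S(\smash{\vec{\cV}},\cP)=\cP$ in the minuend, $\schouten{\smash{\vec{\cX}},\cP}$, and again $n\cQ(\cP)$ from Lemma~\ref{LemmaVertexNumber} in the subtrahend; the two $n\cQ$'s cancel. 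Your classes (i)--(iii) describe this same cancellation informally and correctly.
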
 

\begin{proof}
The expansion $0=\Or(\Id\gamma)(\smash{\vec{\cV}}\otimes\smash{\cP^{\otimes^n}})$ for $\gamma\in\ker\Id$
goes along the lines of~\cite{Ascona96} and~\cite{OrMorphism,DolgushevRogersWillwacher,Jost2013}, 
but the $(n+1)$-\/tuple of multivectors now contains one $1$-\/vector and only $n$ copies of the Poisson
bi\/-\/vector $\cP$. By assumption, $\Id\gamma=\boldsymbol{0}\in{}$\textsc{Gra};
recall that $\Or(\boldsymbol{0})(\text{any multivectors})=0\in\Gamma(\bigwedge^\bullet TM)$.
This zero l.-h.s.\ equates $0=\bigl(\pi_S \mathbin{\smash{ \vec{\circ} }} \Or(\gamma) 
- (-)^{(-1)\cdot(-N)} \Or(\gamma) \mathbin{\smash{ \vec{\circ} }} \pi_S\bigr) 
(\smash{\vec{\cV}}\otimes\smash{\cP^{\otimes^n}})$.\footnote{Here, $\pi_S$ is the 
graded\/-\/symmetric Schouten bracket (so $\pi_S(F,G)=(-)^{|F|-1}\lshad F,G\rshad$), the graph insertion $\smash{ \vec{\circ} }$ into vertices is now the endomorphism insertion into argument slots, $|\pi_S|=-1$, and $N=2n-2$ is the even number of edges in $\gamma$, hence minus the even number of $\dd/\dd\xi_\mu$ in the edge operators $\smash{\vec{\Delta}_{ij}}$ making $\Or(\gamma)$.}

The appointment of graded (multi)\/vectors into the vertices of $\Id\gamma$
(hence, into the argument slots of the endomorphism $\Or(\Id\gamma)$)
is achieved by the graded symmetrization using
$((n+1)!)^{-1}\,\Or(\Id\gamma)\bigl(\pm\sigma(\smash{\vec{\cV}}\otimes\smash{\cP^{\otimes^n}})\bigr)$.
Fortunately, the field $\smash{\vec{\cV}}$ is the only parity\/-\/odd object,
so its transpositions with the parity\/-\/even bi\/-\/vectors $\cP$ produce no sign factor:
these $\pm$ are all $+$. Likewise, the $n!$ permutations of $n$ indistinguishable copies of $\cP$
leave only $n+1$ from $(n+1)!$ in the denominator; to get rid of it, let us multiply by $n+1$ both sides of the equality
$0=\Or(\Id\gamma)(\smash{\vec{\cV}}\otimes\smash{\cP^{\otimes^n}})$.
The symmetrization thus amounts, by the linearity of $\Or(\gamma)$, to its evaluation at the sum of arguments,
$\smash{\vec{\cV}}\cdot\cP^n + \cP\cdot\smash{\vec{\cV}}\cdot\cP^{n-1}+\ldots+\cP^n\cdot\smash{\vec{\cV}}$,
in which the ordering of (multi)\/vectors now matches an arbitrary fixed enumeration of the vertices.

The rest of the proof is standard.\footnote{We have
$
0= \Or(\gamma)(\pi_S(\smash{\vec{\cV}},\cP),\cP^{n-1}) + \Or(\gamma)(\pi_S(\cP,\smash{\vec{\cV}}),\cP^{n-1})
 + \Or(\gamma)(\pi_S(\cP,\cP),\smash{\vec{\cV}},\cP^{n-2}) +\ldots + \Or(\gamma)(\pi_S(\cP,\cP),\cP^{n-2},\smash{\vec{\cV}})
+\Or(\gamma)(\smash{\vec{\cV}},\pi_S(\cP,\cP),\cP^{n-2}) + \Or(\gamma)(\cP,\pi_S(\smash{\vec{\cV}},\cP),\cP^{n-2})
+ \Or(\gamma)(\cP,\pi_S(\cP,\smash{\vec{\cV}}),\cP^{n-2}) + \Or(\gamma)(\cP,\pi_S(\cP,\cP),\smash{\vec{\cV}},\cP^{n-3})
+ \ldots + \Or(\gamma)(\cP,\pi_S(\cP,\cP),\cP^{n-3},\smash{\vec{\cV}})
+ \ldots \text{(the Schouten bracket $\pi_S$ passes along the slots towards the end)}
+ \Or(\gamma)(\smash{\vec{\cV}},\cP^{n-2},\pi_S(\cP,\cP)) + \ldots + \Or(\gamma)(\cP^{n-2},\smash{\vec{\cV}},\pi_S(\cP,\cP))
+ \Or(\gamma)(\cP^{n-1},\pi_S(\smash{\vec{\cV}},\cP)) + \Or(\gamma)(\cP^{n-1},\pi_S(\cP,\smash{\vec{\cV}}))
\mathrel{\mathbf{-}} (-)^N\cdot\bigl[ \pi_S\bigl(\Or(\gamma)(\smash{\vec{\cV}}\cdot\cP^{n-1} + \cP\cdot\smash{\vec{\cV}}\cdot\cP^{n-2}
   + \ldots + \cP^{n-1}\cdot\smash{\vec{\cV}}),\cP\bigr)
+ \pi_S(\Or(\gamma)(\cP^n),\smash{\vec{\cV}}) + \pi_S(\smash{\vec{\cV}},\Or(\gamma)(\cP^n)) 
+ \pi_S\bigl(\cP,\Or(\gamma)(\smash{\vec{\cV}}\cdot\cP^{n-1} + \cP\cdot\smash{\vec{\cV}}\cdot\cP^{n-2}
   + \ldots + \cP^{n-1}\cdot\smash{\vec{\cV}})\bigr) \bigr]  $.
For $\cP$ Poisson, $\pi_S(\cP,\cP)=0$, so we exclude all such terms (
\cite{sqs15}).
The remaining graded\/-\/symmetric Schouten brackets $\pi_S$ contain a bi\/-\/vector as one of the arguments, hence those can be swapped at no sign factor; all doubles, so let us divide by~$2$.
}
There remains $0=\Or(\gamma)\bigl( \pi_S(\smash{\vec{\cV}},\cP)\cdot\cP^{n-1}) + \cP\cdot\pi_S(\smash{\vec{\cV}},\cP)\cdot\cP^{n-2}) 
+ \ldots + \cP^{n-1}\cdot\pi_S(\smash{\vec{\cV}},\cP) \bigr)
- (-)^N \bigl[ \pi_S\bigl( \Or(\gamma)\bigl( \smash{\vec{\cV}}\cdot\cP^{n-1} + \cP\cdot\smash{\vec{\cV}}\cdot\cP^{n-2}
+ \ldots + \cP^{n-1}\cdot\smash{\vec{\cV}} \bigr), \cP\bigr) + \pi_S(\Or(\gamma)(\cP^n),\smash{\vec{\cV}}) \bigr] $.
By the homogeneity assumption, $\pi_S(\smash{\vec{\cV}},\cP)=(-)^{1-1}\lshad\smash{\vec{\cV}},\cP\rshad=\cP$, 
and by construction, $\Or(\gamma)(\cP^n)=\cQ(\cP)$, whence the minuend equals $n\cdot\cQ(\cP)$.
By Lemma~\ref{LemmaVertexNumber}, the graph flow is also homogeneous: $\lshad\smash{\vec{\cV}},\cQ(\cP)\rshad=\lambda\cdot\cQ(\cP)$ with the vertex count $\lambda=n$.
We obtain the equality
\begin{multline*}
(-)^{2n-2}\cdot\lshad \Or(\gamma)\bigl(\smash{\vec{\cV}}\cdot\cP^{n-1} + \cP\cdot\smash{\vec{\cV}}\cdot\cP^{n-2} +\ldots+
\cP^{n-1}\cdot\smash{\vec{\cV}}\bigr),\cP\rshad ={}\\
{}= n\cdot\cQ(\cP) - (-)^{2n-2}\lambda\cdot\cQ(\cP) = (n-(-)^{\text{even}}\,n)\cdot\cQ(\cP)\equiv 0.
\end{multline*}
We conclude that the $1$-\/vector $\smash{\vec{\cX}} \mathrel{{:}{=}} \Or(\gamma)(\smash{\vec{\cV}}\otimes\smash{\cP^{\otimes^{n-1}}})$ lies in~$\ker\lshad\cP,\cdot\rshad 
$.\footnote{\textbf{Exercise.} Extend the proof 
to the case $n=1$, 
$\gamma={\bullet}$, $\Id\gamma=-{\bullet}\!{-}\!\!{-}\!{\bullet}$ (so that the l.-h.s.\ was nonzero
)%
.}
\end{proof}

\begin{example}
Take the Lie algebra $
\mathfrak{gl}_2(\mathbb{R})$ with its four\/-\/dimensional vector space structure;
denote by $x$, $y$, $z$, $v$ the Cartesian coordinates.
Consider the $R$-\/matrix $\left(\begin{smallmatrix} x & y \\ z & v\end{smallmatrix}\right) 
\mapsto \left(\begin{smallmatrix} 0 & y \\ -z & 0 \end{smallmatrix}\right)$
known from~\cite{VanhaeckeBookPoisson};
the standard construction then yeilds the Poisson bi\/-\/vector in the algebra of coordinate functions,
$\cP = \left( x^{2} y + y^{2} z \right) 
{\partial_x }
\wedge 
{\partial_y } + \left( x^{2} z + y z^{2} \right) 
{\partial_x }\wedge 
{\partial_z } + \left( 2 \, x y z + 2 \, y z v \right) 
{\partial_x }\wedge 
{\partial_v } + \left( y^{2} z + y v^{2} \right) 
{\partial_y }\wedge 
{\partial_v } + \left( y z^{2} + z v^{2} \right) 
{\partial_z }\wedge 
{\partial_v }
$.
This bracket has cubic\/-\/nonlinear homogeneous polynomial coefficients, hence $d=3$.
The vector field $\smash{\vec{\cV}} = (d-2)^{-1}\cdot\smash{\vec{\cE}}$ is the (multiple of the) Euler vector field on~$\mathbb{R}^{4}$.
As the graph cocycle~$\gamma$, we take the tetrahedron (see~\cite{f16,Ascona96});
then the symmetry flow is
$\dot{\cP}=\cQ(\cP) = 
( -48 \, x^{5} y - 288 \, x^{3} y^{2} z - 240 \, x y^{3} z^{2} + 192 \, y^{3} z^{2} v - 384 \, x y^{2} z v^{2} - 192 \, y^{2} z v^{3} ) 
{\partial_x }\wedge 
{\partial_y } + ( -48 \, x^{5} z - 288 \, x^{3} y z^{2} - 240 \, x y^{2} z^{3} + 192 \, y^{2} z^{3} v - 384 \, x y z^{2} v^{2} - 192 \, y z^{2} v^{3} ) 
{\partial_x }\wedge 
{\partial_z } + ( -336 \, x^{4} y z - 480 \, x^{2} y^{2} z^{2} - 576 \, x^{3} y z v + 480 \, y^{2} z^{2} v^{2} + 576 \, x y z v^{3} + 336 \, y z v^{4} ) 
{\partial_x }\wedge 
{\partial_v } + ( 192 \, x^{3} y^{2} z - 192 \, x y^{3} z^{2} + 288 \, y^{2} z v^{3} + 48 \, y v^{5} + 48 \, {(8 \, x^{2} y^{2} z + 5 \, y^{3} z^{2})} v ) 
{\partial_y }\wedge 
{\partial_v } + ( 192 \, x^{3} y z^{2} - 192 \, x y^{2} z^{3} + 288 \, y z^{2} v^{3} + 48 \, z v^{5} + 48 \, {(8 \, x^{2} y z^{2} + 5 \, y^{2} z^{3})} v ) 
{\partial_z }\wedge 
{\partial_v }
$.
We detect that this bi\/-\/vector is a coboundary, $\cQ(\cP)=\lshad\smash{\vec{\cY}},\cP\rshad$ with the vector
$\vec{\cY} = 
( -24 \, x^{4} + 120 \, y^{2} z^{2} - 96 \, y z v^{2} ) 
{\partial_x } + ( 96 \, x^{3} y - 96 \, y v^{3} ) 
{\partial_y } + ( 96 \, x^{3} z - 96 \, z v^{3} ) 
{\partial_z } + ( 96 \, x^{2} y z - 120 \, y^{2} z^{2} + 24 \, v^{4} ) 
{\partial_v }
$ mod $\ker\lshad\cP,\cdot\rshad$.
The vector field $\smash{\vec{\cY}}\notin\ker\dd_{\cP}$ cannot be Poisson\/-\/exact (clearly, $\cQ(\cP)\not\equiv 0$),
hence $\smash{\vec{\cY}}$ does not mark the Poisson cocycle of zero $1$-\/vector.\footnote{%
Likewise, by using another $R$-\/matrix 
for $\mathfrak{gl}_2(\mathbb{R})$, namely
$\left(\begin{smallmatrix} x & y \\ z & v \end{smallmatrix}\right) \mapsto 
\left(\begin{smallmatrix} x & y \\ -z & v \end{smallmatrix}\right)$ also from~\cite{VanhaeckeBookPoisson},
we obtain the Poisson bi\/-\/vector
$\cP = 
2 \, x^{2} y 
{\partial_x }\wedge 
{\partial_y } + 2 \, y z^{2} 
{\partial_x }\wedge 
{\partial_z } + ( 2 \, x y z + 2 \, y z v ) 
{\partial_x }\wedge 
{\partial_v } + ( -2 \, x y z + 2 \, y z v ) 
{\partial_y }\wedge 
{\partial_z } + 2 \, y v^{2} 
{\partial_y }\wedge 
{\partial_v } + 2 \, y z^{2} 
{\partial_z }\wedge 
{\partial_v }
$
on $\mathbb{R}^4$ with Cartesian coordinates $x$, $y$, $z$, $v$.
The tetrahedral flow then equals
$\dot{\cP}=\cQ(\cP) = 
( -384 \, x^{5} y - 384 \, x^{3} y^{2} z - 1536 \, x y^{2} z v^{2} + 384 \, { (x^{2} y^{2} z - 4 \, y^{3} z^{2} )} v ) 
{\partial_x }\wedge 
{\partial_y } + ( -384 \, x^{3} y z^{2} - 2688 \, x y^{2} z^{3} + 1152 \, x y z^{2} v^{2} + 384 \, y z^{2} v^{3} - 384 \, { (3 \, x^{2} y z^{2} - 7 \, y^{2} z^{3} )} v ) 
{\partial_x }\wedge 
{\partial_z } + ( -384 \, x^{4} y z - 2688 \, x^{2} y^{2} z^{2} - 1536 \, x^{3} y z v + 2688 \, y^{2} z^{2} v^{2} + 1536 \, x y z v^{3} + 384 \, y z v^{4} ) 
{\partial_x }\wedge 
{\partial_v } + ( 384 \, x^{4} y z + 384 \, x^{2} y^{2} z^{2} + 1536 \, y^{3} z^{3} - 384 \, x y z v^{3} + 384 \, y z v^{4} + 384 \, {(x^{2} y z + y^{2} z^{2} )} v^{2} - 384 \, { (x^{3} y z - 2 \, x y^{2} z^{2} )} v ) 
{\partial_y }\wedge 
{\partial_z } + ( 1536 \, x y^{3} z^{2} + 1536 \, x^{2} y^{2} z v - 384 \, x y^{2} z v^{2} + 384 \, y^{2} z v^{3} + 384 \, y v^{5} ) 
{\partial_y }\wedge 
{\partial_v } + ( -384 \, x^{3} y z^{2} - 2688 \, x y^{2} z^{3} + 1152 \, x y z^{2} v^{2} + 384 \, y z^{2} v^{3} - 384 \, { (3 \, x^{2} y z^{2} - 7 \, y^{2} z^{3} )} v ) 
{\partial_z }\wedge 
{\partial_v }
$.
It is Poisson\/-\/trivial: $\cQ(\cP)=\lshad\smash{\vec{\cY}},\cP\rshad$ with a representative
$\vec{\cY} = 
( -96 \, x^{4} + 576 \, y^{2} z^{2} - 384 \, y z v^{2} ) 
{\partial_x } + ( -192 \, x y^{2} z + 192 \, y^{2} z v - 384 \, y v^{3} ) 
{\partial_y } + ( -96 \, x^{3} z - 96 \, x z v^{2} + 96 \, z v^{3} + 96 \, {\left(x^{2} z - 4 \, y z^{2}\right)} v ) 
{\partial_z } + ( -576 \, y^{2} z^{2} - 384 \, x y z v + 96 \, v^{4} ) 
{\partial_v }
$.
These explicit examples of Poisson\/-\/exact bi\/-\/vector flows $\dot{\cP}=\cQ(\cP)=\lshad\smash{\vec{\cY}},\cP\rshad$ will be useful in the future study of the mechanism $\smash{\vec{\cY}}=\smash{\vec{\cY}}(\gamma,\smash{\vec{\cV}},\cP)$ of their observed $\dd_\cP$-\/triviality.
}
But the universal vector field $\smash{\vec{\cX}}(\gamma,\smash{\vec{\cV}},\cP)\in\ker\dd_{\cP}$ is identically zero
on~$\mathbb{R}^4$. Indeed, the Euler field $\smash{\vec{\cE}}=\smash{\vec{\cV}}$ is linear, yet it is readily seen
from the figures in~\cite{f16} that in every orgraph from the $1$-\/vector $\Or(\gamma)(\smash{\vec{\cV}\otimes\cP^{\otimes^{n-1}}})$,
the vertex with $\smash{\vec{\cV}}$ is differentiated at least twice (and at most thrice), so $\smash{\vec{\cX}}\equiv0$.
\end{example}

\begin{prop}
The flow $\smash{\dot{\cP}}=\Or(\text{tetrahedron $\boldsymbol{\gamma}_3$})(\cP)$ preserves the Nambu class of Poisson brackets,
$\{f,g\}_{\cP}=\varrho(x,y,z)\cdot\det\bigl(\dd(a,f,g)/\dd(x,y,z)\bigr)$ with arbitrary $\varrho$ and global Casimir $a$ on $\mathbb{R}^3$\textup{:}
the flow forces the nonlinear evolution $\dot{a}$, $\dot{\varrho}
$ with differential\/-\/polynomial r.-h.s.\\
$\bullet$\quad This flow $\dot{\cP}=\cQ(\cP)$ is \emph{not} Poisson\/-\/exact in terms of any vector field $\smash{\vec{\cY}}$ with 
differential\/-\/polynomial coefficients \textup{(}cubic in both $a$ and $\varrho$, of total differential order eight\textup{)}.\\
\mbox{ }
\quad The cocycle equation at hand, $\mathcal{E}(\boldsymbol{\gamma}_3,a,\varrho)=\{\dot{\cP}=\lshad\smash{\vec{\cY}},\cP\rshad\}$,
is a first\/-\/order PDE with differential\/-\/polynomial coefficients \textup{(}their skew\/-\/symmetry under permutations of $x$, $y$, $z$ is inherited from the property of the Jacobian determinant and from the transformation law for the density $\varrho$ in $\cP$\textup{)}. Whether this equation $\mathcal{E}$ does not admit any non\/-\/polynomial solutions $\smash{\vec{\cY}}(a_{|\sigma|\leqslant3}, \varrho_{|\tau|\leqslant2})$ 
is an open problem.
\end{prop}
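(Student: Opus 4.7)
The plan proceeds in three steps. First, on $\mathbb{R}^3$ every bi\/-\/vector has rank $\leqslant 2$, and the Jacobi identity for $\cP=\varrho\cdot\Id a/\Id x\Id y\Id z$ is empty (the single tri\/-\/vector component of $\lshad\cP,\cP\rshad$ vanishes identically in $a,\varrho$), whence every Poisson bracket on $\mathbb{R}^3$ is Nambu, and so is the Poisson cocycle $\cQ(\cP)=\Or(\boldsymbol{\gamma}_3)(\cP)$. Differentiating $\cP(\veps)=\varrho(\veps)\cdot\Id a(\veps)/\Id x\Id y\Id z$ at $\veps=0$ and matching the three independent bi\/-\/vector components of the linearisation against those of $\cQ(\cP)$ yields an overdetermined but consistent linear system for $(\dot a,\dot\varrho)$; quotienting by the Nambu gauge $(a,\varrho)\mapsto(\tilde a(a),\varrho/\tilde a'(a))$ fixes explicit differential\/-\/polynomial right\/-\/hand sides, visibly nonlinear, as one already sees from the published tetrahedral flow on, e.g., $a=x^2+y^2+z^2$, $\varrho\equiv 1$ (cf.\ \cite{f16}).

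For the bulleted non\/-\/exactness claim, I would fix the bi\/-\/grading from the tetrahedron ($n=4$ vertices, $N=6$ edges): $\cQ(\cP)$ is quartic in $\cP$, hence quartic in $\varrho$ and in $\Id a$, with six extra derivatives distributed over these factors. Since $\cP$ is linear in $\varrho$ and in $\Id a$, any trivialising $\vec{\cY}$ with $\lshad\vec{\cY},\cP\rshad=\cQ(\cP)$ must be cubic in $\varrho$ and cubic in $a$, its coefficients depending only on $a_{|\sigma|\leqslant 3}$ and $\varrho_{|\tau|\leqslant 2}$, of total differential order eight. The space of polynomial\/-\/coefficient vector fields with this bi\/-\/grading, further cut by the $S_3$\/-\/skew\/-\/symmetry under permutations of $(x,y,z)$ inherited from the Jacobian determinant and the density transformation of $\varrho$, is finite\/-\/dimensional. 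Writing $\vec{\cY}=\sum_\alpha c_\alpha\,\vec{Y}_\alpha$ with undetermined $c_\alpha\in\mathbb{Q}$, expanding $\lshad\vec{\cY},\cP\rshad-\cQ(\cP)$ in the matching bi\/-\/vector basis, and collecting coefficients yields a sparse linear system $A\boldsymbol{c}=\boldsymbol{b}$; the non\/-\/exactness assertion reduces to the computer\/-\/algebra statement $\boldsymbol{b}\notin\im A$.

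The stated $S_3$\/-\/skew\/-\/symmetry of the PDE $\mathcal{E}$ is a by\/-\/product of the set\/-\/up: a permutation $\sigma\in S_3$ of $(x,y,z)$ multiplies $\Id a/\Id x\Id y\Id z$ by $\mathrm{sgn}(\sigma)$ and $\varrho$ by the same sign (as a density on $\mathbb{R}^3$), so $\cP$ acquires $\mathrm{sgn}(\sigma)$; naturality of $\Or$ and of $\lshad\cdot,\cdot\rshad$ propagates this sign to both sides of $\mathcal{E}$. The true sticking point will be the second step: enumerating the ansatz modulo the $S_3$\/-\/reduction and the Nambu gauge is bulky but mechanical, whereas justifying that the stated bi\/-\/degree and order envelope is in fact tight requires a separate weight\/-\/homogeneity argument under the Euler field $\vec{\cE}$ with suitable weights assigned to $(a,\varrho)$, showing that no differential\/-\/polynomial monomial outside the envelope can contribute to $\lshad\vec{\cY},\cP\rshad$ at the precise bi\/-\/grading of the tetrahedral output. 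Once that tightness is in hand, the rank computation over $\mathbb{Q}$ is a finite verification.
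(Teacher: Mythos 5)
The paper offers no proof of this Proposition at all: it is stated as a (computer\/-\/assisted) fact, with only the earlier footnote on Nambu\/-\/type brackets gesturing at the first claim, so there is no argument of the authors' to compare yours against. Your overall programme --- linearise the Nambu ansatz and match it against $\cQ(\cP)$ for the first claim; reduce the bulleted non\/-\/exactness to a finite linear system $A\boldsymbol{c}=\boldsymbol{b}$ over a graded, $S_3$\/-\/reduced ansatz and check $\boldsymbol{b}\notin\im A$; derive the skew\/-\/symmetry of $\mathcal{E}$ from the transformation of the Jacobian determinant and of the density --- is exactly the methodology these authors use elsewhere (\cite{f16,sqs15}) and is the right shape of argument. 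Note also that the bullet, as literally stated, already restricts $\smash{\vec{\cY}}$ to the bi\/-\/degree $(3,3)$ and order\/-\/eight envelope, so the ``tightness'' step you flag as the sticking point is not needed for the claim as written; the bi\/-\/degree part of it is in any case immediate by separating degrees in $(\varrho,a)$, since the coboundary equation is an identity in arbitrary $a$ and $\varrho$.

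The genuine gap is in your first step. From ``the tri\/-\/vector $\lshad\cP,\cP\rshad$ vanishes identically for $\cP=\varrho\,\Id a/\Id x\Id y\Id z$'' you conclude ``whence every Poisson bracket on $\mathbb{R}^3$ is Nambu'': this is a non sequitur --- you have shown Nambu $\Rightarrow$ Poisson, while what you need is the converse, and the Jacobi identity on $\mathbb{R}^3$ is certainly not empty for a general bi\/-\/vector. The converse does hold, but for a different reason: dualising $\cP$ to the $1$-\/form $\alpha=\cP\inner\dvol$ turns $\lshad\cP,\cP\rshad=0$ into the Frobenius condition $\alpha\wedge\Id\alpha=0$, whence $\alpha=\varrho\,\Id a$ locally on the open dense set where $\cP\neq0$; globality of the Casimir is an extra hypothesis, not a theorem. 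This matters because the consistency of your ``overdetermined but consistent'' $3\times2$ linear system for $(\dot a,\dot\varrho)$ is precisely the content of the first claim and cannot be asserted: it must be derived either from the parametric Frobenius argument applied to $\alpha(\veps)=\bigl(\cP+\veps\,\cQ(\cP)\bigr)\inner\dvol$, using $\cQ(\cP)\in\ker\lshad\cP,\cdot\rshad$, or by exhibiting $\dot a$, $\dot\varrho$ explicitly; and the differential\/-\/polynomiality of the resulting right\/-\/hand sides (as opposed to rational expressions in $a_\sigma$, $\varrho_\tau$ produced by solving the linear system) is a further point that needs verification rather than assertion.
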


{\small
\smallskip
\noindent\textbf{Acknowledgements.}
A.V.K.\ thanks the organizers of international workshop SQS'19 (August 26\/--\/31, 2019 in Yerevan, Armenia) for a warm atmosphere during the event. 
A~part of this research was done 
at the IH\'ES (Bures\/-\/sur\/-\/Yvette, France);
the authors are grateful to the \textsc{ratp} and \textsf{stif} for setting up stimulating working conditions.
The authors thank G.\,H.\,E.\,Duchamp and M.~Kontsevich for helpful discussions.

} 

{\footnotesize

}

\end{document}